\documentclass[12pt]{article}
\usepackage{geometry}                
\geometry{letterpaper}                   
\usepackage{graphicx}
\usepackage{amssymb}
\usepackage{epstopdf}
\DeclareGraphicsRule{.tif}{png}{.png}{`convert #1 `dirname #1`/`basename #1 .tif`.png}
\def\eqalign#1{\null\,\vcenter{\openup\jot \mathsurround=0pt \ialign{\strut
     \hfil$\displaystyle{##}$&$ \displaystyle{{}##}$\hfil \crcr#1\crcr}}\,}
\def\K{{\cal K}}
\def\H{{\cal H}}
\usepackage{amsthm}

\newtheorem{example}{Example}
\newtheorem{proposition}{Proposition}

\title{Discrete gradient methods have an energy conservation law}
\author{Robert I McLachlan and G R W Quispel\\[4mm]
{\em Dedicated to Arieh Iserles on the occasion of his 65th birthday}}
\def\Da{\overline{H_z} }
\def\Db{\overline{H_{z_x}} }
\def\dg{\overline{\nabla}}
\def\R{{\mathbb R}}

\begin{document}
\maketitle
\begin{abstract}
We show for a variety of classes of conservative PDEs that discrete gradient methods designed to have a conserved quantity (here called energy) also have a time-discrete conservation law. The discrete conservation law has the same conserved density as the continuous conservation law, while its flux is found by replacing all derivatives of the conserved density appearing in the continuous flux by discrete gradients. 
\end{abstract}
The discrete gradient method \cite{gonzalez,mc-qu-ro,qu-tu,SimoTarnow92} is a powerful and general technique for constructing integral-preserving integrators. It includes projection methods and the average vector field method \cite{qu-mc} as special cases. In particular, it is useful for constructing energy-preserving integrators. 

For PDEs, a conservation law is more fundamental than a conserved quantity, for the former implies the latter only in the presence of suitable boundary conditions, and the flux is (usually) a local quantity. Discrete gradient methods have been widely used to construct energy-preserving integrators for PDEs and their semidiscretizations; see \cite{avfpde} for a survey. In this paper we show that discrete gradient methods also preserve energy conservation laws (ECLs) when the symplectic structure is constant. Discrete gradient methods are not usually symplectic or variational and the resulting conservation laws developed in this paper are not instances of Noether's theorem. Despite their nonvariational nature, the use of `conservative' discretizations of systems of conservation laws of mass, momentum and so on---that is, of discretizations with discrete conservation laws---is widespread in numerical methods for PDEs. The typical case is that the conserved density is (one of) the dependent variable(s): for example,
$$ u_t + f(u)_x = 0$$
is a conservation law with conserved density $u$ and flux $f(u)$. Euler's method
$$\frac{u_1 - u_0}{\Delta t} + f(u_0)_x = 0$$
is a time discretization in the form of a (time-)discrete conservation law. (The full discretization
$$\frac{u_1 - u_0}{\Delta t} + \Delta( f(u_0) )$$
where $\Delta$ is any difference operator in $x$, is a fully discrete conservation law, but the spatial discretization is covered by discrete versions of Noether's theorem (see, e.g., \cite{hy-ma} and references therein) and is not the subject of this paper.) Another known case is when a conservation law arises in a system of Euler--Lagrange equations in which the Lagrangian admits a symmetry acting only on the dependent variables. If the symmetry is fairly simple, e.g., if it is linear, then there may exist symplectic or discrete Lagrangian methods that preserve the symmetry and which will have a discrete conservation law. See \cite{ry-mc-fr} where this situation is illustrated for the phase symmetry of the nonlinear Schr\"odinger equation.

Discrete gradient methods are a way of ensuring that a time discretization has a conservation law when the conserved density, that we call energy in this paper, is not or cannot be one of the dependent variables, in a way that is independent of any variational structure or symmetry.

A discrete gradient on a vector space $V$ with inner product $a^\top b$ is a function satisfying the discrete gradient axiom
$$ (z_1-z_0)^\top \dg H(z_0,z_1) = H(z_1) -H(z_0)$$
for all smooth functions $H\colon V\to{\mathbb R}$ and all $z_0$, $z_1\in V$. For consistency in the continuous limit, generally one also requires
$$ \dg H(z_0,z_0) = \nabla H(z_0),$$
although for conservation properties only the discrete gradient axiom is needed. A common discrete gradient is the Average Value discrete gradient given by
$$ \dg H(z_0,z_1) = \int_0^1 \nabla H(\xi z_1 + (1-\xi) z_0)\, {\rm d}\xi.$$

In coordinates we write $\overline{H_y}$ for the $y$-component of $\overline\nabla H$. If $z=(x,y)$, then the discrete gradient axiom becomes
$$ (x_1 - x_0)^\top\overline{H_x} + (y_1-y_0)^\top \overline{H_y} = H(x_1,y_1) -H(x_0,y_0).$$

A discrete gradient method for the system $z_t = K(z) \nabla H(z)$, $K^\top = -K$, is a map $z_0\mapsto z_1$ given by
$$\frac{z_1-z_0}{\Delta t} = \overline K(z_0,z_1)\overline\nabla H(z_0,z_1)$$
where $\overline K^\top = -\overline K$ and (for consistency) $\overline K(z_0,z_0) = K(z_0,z_0)$.

We first give an explicit calculation of the ECL for first-order canonical Hamiltonian PDEs in one space dimension and their discrete gradient time discretizations.

\begin{proposition}
Let $z(x,t)\in\R^n$, let $K$ be an $n\times n$ antisymmetric matrix and let $\cal H$ be a Hamiltonian of the form
${\cal H} = \int\! H(z,z_x)\, {\rm d}x$, and consider the equations of motion
$$z_t =K\frac{\delta {\cal H}}{\delta z} = K(H_z - \partial_x H_{z_x}).$$
The discrete gradient method
$$ \frac{z_1 - z_0}{\Delta t} = K\left(\Da - \partial_x \Db\right)$$
has a discrete ECL. Its conserved energy density, $H$, is the same as that of the PDE. Its flux is  the same bilinear form as that of the continuous ECL, but with the gradients of the energy density replaced by their discrete gradients. 
\end{proposition}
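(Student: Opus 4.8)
The plan is to reproduce, step for step, the classical derivation of the continuous ECL, with the discrete gradient axiom playing the role of the chain rule.

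First I would record the continuous conservation law. Differentiating the density $H(z,z_x)$ in time and using $\partial_t z_x = \partial_x z_t$ gives $\partial_t H = H_z^\top z_t + H_{z_x}^\top\partial_x z_t$; one application of the product rule in $x$ isolates the variational derivative, $\partial_t H = (H_z-\partial_x H_{z_x})^\top z_t + \partial_x(H_{z_x}^\top z_t)$. Inserting the equation of motion $z_t = K(H_z-\partial_x H_{z_x})$ and using $v^\top K v=0$ for antisymmetric $K$ annihilates the first term, leaving the ECL $\partial_t H + \partial_x F = 0$ with flux $F=-H_{z_x}^\top K(H_z-\partial_x H_{z_x}) = -H_{z_x}^\top K\,\frac{\delta\H}{\delta z}$ --- a bilinear expression in the gradients $H_z,H_{z_x}$ (and $\partial_x H_{z_x}$) of the density.

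Next I would carry out the time-discrete analogue. Regarding $H$ as a function of the $2n$ independent arguments $(z,z_x)$, the discrete gradient axiom reads $(z_1-z_0)^\top\Da + (\partial_x z_1 - \partial_x z_0)^\top\Db = H_1-H_0$, where $\Da,\Db$ are the two blocks of $\dg H$ and we write $H_i = H(z_i,\partial_x z_i)$; the point is that the $z_x$-increment is exactly $\partial_x(z_1-z_0)$, the discrete shadow of $\partial_t z_x = \partial_x z_t$. Applying the (exact) product rule in $x$ to the second term gives
$$ H_1-H_0 = (z_1-z_0)^\top(\Da-\partial_x\Db) + \partial_x\left((z_1-z_0)^\top\Db\right). $$
Substituting the scheme $z_1-z_0 = \Delta t\,K(\Da-\partial_x\Db)$, the first term equals $\Delta t\,v^\top K v=0$ by antisymmetry, and dividing by $\Delta t$ yields $\frac{1}{\Delta t}(H_1-H_0) = \partial_x\left(\Db^\top K(\Da-\partial_x\Db)\right)$. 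This is precisely the continuous ECL with $\partial_t$ replaced by the difference quotient and $H_z,H_{z_x},\partial_x H_{z_x}$ replaced by $\Da,\Db,\partial_x\Db$, so the density is unchanged and the flux is the same bilinear form in the (now discrete) gradients, as claimed.

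The computation is short, so the point to get right is conceptual rather than technical: one must read $\dg H$ as the discrete gradient of $H$ on the extended phase space with coordinates $(z,z_x)$, so that it splits into the blocks $\Da$ and $\Db$, and then observe that evaluating on an actual field makes the $z_x$-increment equal $\partial_x$ of the $z$-increment. With that in place every manipulation above mirrors the continuous one line for line --- in particular the product rule for $\partial_x$ is exact because only time has been discretized --- and the antisymmetry of $K$ (here even constant, $\overline K=K$) closes the argument. As in the finite-dimensional case, only the discrete gradient axiom and $\overline K^\top=-\overline K$ are used; consistency of $\dg H$ and of $\overline K$ plays no role.
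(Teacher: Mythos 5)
Your proof is correct and takes essentially the same route as the paper's: the discrete gradient axiom on the extended variables $(z,z_x)$ plays the role of the chain rule, the exact product rule in $x$ supplies the total-derivative structure, and the antisymmetry of $K$ kills the non-divergence term, yielding the same flux $-\Db^\top K\Da + \Db^\top K\partial_x\Db$. The only (immaterial) difference is that you integrate by parts before substituting the scheme, whereas the paper substitutes first and then recognizes the total $x$-derivative.
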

\begin{proof}
The continuous ECL can be found as follows.
$$\eqalign{ H_t&= H_z^\top z_t + H_{z_x}^\top z_{xt} \cr
&= H_z^\top K(H_z - \partial_x H_{z_x}) + H_{z_x}^\top K(H_z - \partial_x H_{z_x})_x \cr
&= \left(H_{z_x}^\top K H_z - H_{z_x}^\top K\partial_x H_{z_x}\right)_x \cr
}$$
giving the energy conservation law $H_t + F_x = 0$ where
$$ F = -H_{z_x}^\top K H_z + H_{z_x}^\top K\partial_x H_{z_x}.$$
Applying a discrete gradient method in $(z,z_x)$ to the PDE gives the time discretization
$$ \frac{z_1 - z_0}{\Delta t} = K\left(\Da - \partial_x \Db\right).$$
The change in energy density over one time step is 
$$\eqalign{
 \frac{H(z_1,z_{1x}) - H(z_0,z_{0x})}{\Delta t}&= 
 \Da^\top \frac{z_1-z_0}{\Delta t} + \Db^\top\frac{z_{1x}-z_{0x}}{\Delta t} \cr
 &= \Da^\top  K (\Da - \partial_x \Db) + \Db^\top K(\Da-\partial_x \Db)_x \cr
 &= \left(\Db^\top K\Da - \Db^\top K\partial_x\Db\right)_x \cr}$$
 which establishes the result.
 \end{proof}
 Note that the discrete calculation exactly follows the continuous one, the key step being the discrete gradient axiom. The calculation is entirely local and does not require integration by parts.
 
\begin{example}\rm
The nonlinear wave equation $q_t = p$, $p_t = q_{xx}-V'(q)$ fits the framework of Proposition 1 with
$n=2$, $K=\left(\matrix{0 & 1 \cr -1 & 0}\right)$, and $z = (q,p)^\top$. The energy conservation law is
$$ H(q,p)_t + F(q,p)_x = 0$$
where
$$H = \frac{1}{2}p^2 + \frac{1}{2}q_x^2 + V(q),\quad F = - pq_x,$$
and the discrete gradient ECL is
$$ \frac{H(q_1,p_1) - H(q_0,p_0)}{\Delta t} + F(\overline q,\overline p)_x =0$$
where $\overline q = (q_0+q_1)/2$ and $\overline p = (p_0+p_1)/2$.
\end{example}

\begin{proposition}
Let $\K$ be any constant (i.e., independent of $z$) skew-adjoint differential operator and 
let $\H = H(z,z_x,z_{xx},\dots)$ be any differential function of $z$ (and possibly $x$, although
we suppress the $x$). Let $\K$ and $\H$ determine the Poisson system
$$ z_t = \K\frac{\delta {\cal H}}{\delta z} = \K E(H)$$
where
$$ E(H) := H_z - \partial_x H_{z_x} + \partial_{xx} H_{z_{xx}} - \dots$$
is the Euler operator applied to $H$.
The flux associated to the conserved density $H$ is a quadratic form in the derivatives of $H$. 
A discrete gradient method applied to this system has a discrete ECL with conserved density $H$ and
flux given by the same quadratic form with derivatives of $H$ replaced by discrete gradients.
\end{proposition}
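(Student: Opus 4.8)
The plan is to run the same three ingredients that powered Proposition~1 --- the product rule, skew-adjointness of the structure operator, and the discrete gradient axiom --- except that the ``integration by parts'' now has to be iterated to reproduce the full Euler operator. Write $H_{(k)}$ for the partial gradient $\partial H/\partial(\partial_x^k z)$, so that $H_{(0)}=H_z$, $H_{(1)}=H_{z_x}$, $H_{(2)}=H_{z_{xx}}$, and so on, and $E(H)=\sum_{k\ge0}(-1)^k\partial_x^k H_{(k)}$. First I would record the continuous ECL. The chain rule gives $H_t=\sum_{k\ge0}H_{(k)}^\top\partial_x^k z_t$. The core algebraic fact is the telescoping identity, valid for \emph{any} vector functions $\phi_0,\phi_1,\dots$ and any vector function $w$,
$$\sum_{k\ge0}\phi_k^\top\,\partial_x^k w=\Big(\sum_{k\ge0}(-1)^k\partial_x^k\phi_k\Big)^{\!\top} w+\partial_x B(\phi,w),\qquad B(\phi,w)=\sum_{k\ge1}\sum_{j=0}^{k-1}(-1)^j(\partial_x^j\phi_k)^\top\,\partial_x^{\,k-1-j}w,$$
which follows from repeated use of the product rule alone; here $B$ is bilinear in the $x$-derivatives of the $\phi_k$ and of $w$. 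Taking $\phi_k=H_{(k)}$ and $w=z_t$ and then substituting $z_t=\K E(H)$ yields $H_t=E(H)^\top\K\,E(H)+\partial_x B(\{H_{(k)}\},\K E(H))$.

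Next I would dispose of the quadratic term $E(H)^\top\K E(H)$. Writing $\K=\sum_j c_j\partial_x^j$ with constant $c_j$, skew-adjointness is equivalent to $c_j^\top=(-1)^{j+1}c_j$ for every $j$ (even-order coefficients antisymmetric, odd-order ones symmetric). A short telescoping argument --- again only the product rule together with these symmetry relations, handling each term $a^\top c_j\partial_x^j a$ separately --- shows that for any vector function $a$,
$$a^\top\K a=\partial_x G(a),$$
where $G$ is a quadratic form in $a$ and its $x$-derivatives depending on $\K$ but \emph{not} on $H$. This is precisely where the hypothesis that $\K$ be constant is used: for a non-constant skew-adjoint operator $a^\top\K a$ need only be a total derivative modulo terms that integrate to zero, not a pointwise one. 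Combining, the PDE has the ECL $H_t+F_x=0$ with $F=-G(E(H))-B(\{H_{(k)}\},\K E(H))$, which, since $E(H)$ is linear in the $\partial_x^j H_{(k)}$, is the advertised quadratic form in the derivatives of $H$.

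Finally the discrete case, which I expect to mirror the continuous computation line for line. After truncating $H$ so that it depends on $z,z_x,\dots,\partial_x^N z$ and applying a discrete gradient $\dg H(z_0,z_1)$ on the corresponding jet space, with components $\overline{H_{(k)}}$, the method reads $\frac{z_1-z_0}{\Delta t}=\K\,\overline{E(H)}$, where $\overline{E(H)}:=\sum_k(-1)^k\partial_x^k\,\overline{H_{(k)}}$. The discrete gradient axiom in the jet variables, together with the fact that $\partial_x$ commutes with the difference quotient, gives
$$\frac{H(z_1,z_{1x},\dots)-H(z_0,z_{0x},\dots)}{\Delta t}=\sum_{k\ge0}\overline{H_{(k)}}^{\,\top}\,\partial_x^k w,\qquad w:=\frac{z_1-z_0}{\Delta t}.$$
Now I apply the telescoping identity with $\phi_k=\overline{H_{(k)}}$ --- legitimate because that identity is purely algebraic and does not care that the $\phi_k$ are discrete gradients rather than genuine derivatives --- then substitute $w=\K\,\overline{E(H)}$, and finally apply $a^\top\K a=\partial_x G(a)$ with $a=\overline{E(H)}$ and the \emph{same} $G$. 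The outcome is the discrete ECL
$$\frac{H(z_1,z_{1x},\dots)-H(z_0,z_{0x},\dots)}{\Delta t}+\overline F_x=0,\qquad \overline F=-G\big(\overline{E(H)}\big)-B\big(\{\overline{H_{(k)}}\},\,\K\,\overline{E(H)}\big),$$
i.e.\ the continuous flux $F$ with every $\partial_x^j H_{(k)}$ replaced by $\partial_x^j\,\overline{H_{(k)}}$.

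The only step with real content is the lemma $a^\top\K a=\partial_x G(a)$ for constant skew-adjoint $\K$, together with the observation that its $G$ is $H$-independent and hence common to the two computations; I expect the iterated integration-by-parts bookkeeping for $B$ to be routine, and the parallelism between the discrete and continuous derivations to be exact, exactly as in Proposition~1.
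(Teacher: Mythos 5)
Your proof is correct and follows essentially the same route as the paper: the iterated integration-by-parts identity producing the bilinear boundary term $B$ is exactly the paper's $A(Q,L)$, your lemma $a^\top\K a=\partial_x G(a)$ is the paper's $a^\top\K b+b^\top\K a=\partial_x S(a,b)$ specialized to $b=a$, and the discrete calculation mirrors the continuous one via the discrete gradient axiom, just as the paper asserts. The only difference is that you derive explicitly the flux formula that the paper delegates to the citation of Vanneste.
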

\begin{proof}
From skew-adjointness of $\K$ we can write
$$ a^\top {\cal K}b + b^\top {\cal K}a = \partial_x S(a,b).$$
(For example, $S(a,b)=a^\top \K_2 b$ for ${\cal K} = \K_1 + \K_2\partial_x$.)
Then the flux for the energy conservation law is the quadratic form \cite{vanneste}
$$ S(E(H),E(H)) - A({\cal K}E(H),H)$$
where 
$$ A(Q,L) = \sum_{n=1}^\infty\sum_{i=0}^{n-1}(-1)^i \partial_x^{n-1-i}Q\partial_x^i\frac{\partial L}{\partial z_x^n}.$$
The calculation of the discrete ECL now follows as in Proposition 1.
\end{proof}

In the literature there has been much attention paid to multisymplectic discretizations of variational PDEs \cite{br-re}. These often take the multi-Hamiltonian form
$$ K z_t + L z_x = \nabla S(z)$$
which has a multisymplectic conservation law $({\rm d}z\wedge K {\rm dz})_t + ({\rm d}z\wedge L {\rm dz})_x = 0$. Multisymplectic methods preserve a discretization of this (differential) conservation law, but do not have a energy conservation law. (This is the PDE analogue of the fact that a integrator for a general Hamiltonian ODE cannot be both symplectic and energy-preserving). Our next result covers this case. As in Proposition 2, it could be extended to differential operators $K$ and arbitrary differential polynomials $H$, but the case considered here shows the basic structure.

\begin{proposition}
Let $\H = H(z,z_x)$ and let $K$ be a (possibly singular) antisymmetric matrix.
Then PDE
$$ K z_t = \frac{\delta \H}{\delta z}$$
has an ECL with conserved density $H$ and flux given by a bilinear form in $z_t$ and the derivatives of $H$.
The discrete gradient method
$$ K\frac{z_1-z_0}{\Delta t} = \overline H_z - \partial_x \overline H_{z_x}$$
has a discrete ECL with conserved density $H$ and flux given by the same bilinear form with $z_t$ replaced by $(z_1-z_0)/\Delta t$ and the derivatives of $H$ replaced by discrete gradients.
\end{proposition}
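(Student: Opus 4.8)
The plan is to mirror the two-part structure of the proof of Proposition 1: first derive the continuous ECL by a direct local computation, then repeat that computation almost verbatim in the discrete setting, with the discrete gradient axiom playing the role that the chain rule plays in the continuous case. The one new feature, forced by the possible singularity of $K$, is that we can no longer solve the equation of motion for $z_t$ (or for $(z_1-z_0)/\Delta t$); instead we exploit antisymmetry of $K$ directly.

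For the continuous law I would start from $H_t = H_z^\top z_t + H_{z_x}^\top z_{xt}$ and substitute $H_z = K z_t + \partial_x H_{z_x}$, which is just the equation $K z_t = H_z - \partial_x H_{z_x}$ rearranged. The term $z_t^\top K z_t$ vanishes because $K^\top = -K$, leaving $H_t = (\partial_x H_{z_x})^\top z_t + H_{z_x}^\top z_{xt} = \partial_x\!\left(H_{z_x}^\top z_t\right)$. This is the ECL $H_t + F_x = 0$ with $F = -H_{z_x}^\top z_t$, a bilinear form in $z_t$ and the derivative of $H$ with respect to $z_x$, as claimed.

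For the discrete law, write $v = (z_1-z_0)/\Delta t$, so that $v_x = (z_{1x}-z_{0x})/\Delta t$ and the scheme reads $Kv = \overline{H_z} - \partial_x\overline{H_{z_x}}$. The discrete gradient axiom in the variables $(z,z_x)$ gives
$$\frac{H(z_1,z_{1x}) - H(z_0,z_{0x})}{\Delta t} = \overline{H_z}^\top v + \overline{H_{z_x}}^\top v_x .$$
Substituting $\overline{H_z} = Kv + \partial_x\overline{H_{z_x}}$ and using $v^\top K v = 0$ (again antisymmetry of $K$) collapses the right-hand side to $(\partial_x\overline{H_{z_x}})^\top v + \overline{H_{z_x}}^\top v_x = \partial_x\!\left(\overline{H_{z_x}}^\top v\right)$, which is the discrete ECL with density $H$ and flux $-\overline{H_{z_x}}^\top v$ --- exactly the continuous flux with $z_t$ replaced by $(z_1-z_0)/\Delta t$ and $H_{z_x}$ replaced by its discrete gradient.

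As in Proposition 1 there is no real obstacle: the calculation is purely local, uses no integration by parts, and the only steps doing any work are the discrete gradient axiom together with the skew-symmetry identity $v^\top K v = 0$. The one point worth flagging is that singularity of $K$ makes the scheme an implicit (differential-algebraic) relation for $z_1$, so the statement should be read as asserting that \emph{whenever} the scheme admits a solution, that solution obeys the stated discrete ECL. Extending to differential operators $K$ and higher-order densities $H(z,z_x,z_{xx},\dots)$ would proceed exactly as the passage from Proposition 1 to Proposition 2: replace $F=-H_{z_x}^\top z_t$ by the bilinear analogue of the $S$- and $A$-terms there, with the same derivatives-to-discrete-gradients substitution.
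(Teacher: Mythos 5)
Your proof is correct and is essentially the paper's own argument: both rest on the discrete gradient axiom, the product rule for $\partial_x\left(\overline{H_{z_x}}^\top v\right)$, and antisymmetry of $K$ killing the quadratic term, the only cosmetic difference being that the paper starts from $0 = z_t^\top K z_t$ and expands while you start from $H_t$ and substitute. Your added remark that a singular $K$ makes the scheme a differential-algebraic relation, so the ECL holds for whatever solutions exist, is a reasonable clarification but not a change of method.
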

\begin{proof}
First we note that multi-Hamiltonian PDEs take the given form with $H = S(z)- \frac{1}{2}z^\top L z_x$ and $ \frac{\delta \H}{\delta z} = \nabla S(z)-Lz_x$. The continuous ECL is found as follows:
$$ \eqalign{
0 &= z_t^\top K z_t\cr
& = z_t^\top(H_z - \partial_x H_{z_x}) \cr
&= (H_z^\top z_t + H_{z_x}^\top z_{xt}) - (H_{z_x}^\top z_{xt} + z_{t}^\top \partial_x H_{z_x})\cr
&= H_t - (H_{z_x}^\top z_t)_x.}
$$
The key step is the use of the product rule for the derivative of the flux $H_{z_x}^\top z_t$. Note that there is no requirement for $K$ to be invertible.
If $K$ is invertible then one can solve and substitute for $z_t$ to give the same flux as in Proposition 1.
For the discrete gradient method
we have
$$
\eqalign{
0 &= (z_1 - z_0)^\top K (z_1 - z_0)/\Delta t \cr
&= (z_1-z_0)^\top (\overline{H_z} - \partial_x \overline{H_{z_x}}) \cr
&= (z_1-z_0)^\top\overline{ H_z}+ (z_{1x}-z_{0x})^\top \overline{ H_{z_x}} - ((z_{1x}-z_{0x})^\top \overline{ H_{z_x} }
+ (z_1 - z_0)^\top \partial_x \overline{ H_{z_x}}) \cr
&= H(z_1,z_{1x}) - H(z_0,z_{0x}) - (\overline{H_{z_x}}^\top(z_{1}-z_{0}))_x \cr
&= \Delta_t H - (\overline{H_{z_x}}^\top\Delta_t z)_x,\cr
}$$
establishing the result.
\end{proof}

We now consider the fully discrete case. The existence and derivation of conservation laws for variational and Hamiltonian discretizations is the subject of discrete versions of Noether's theorem, see e.g. \cite{hy-ma}. We restrict ourselves here to the observation that if the semidiscretization has a semidiscrete ECL, then a discrete gradient method applied to this semidiscretization will have a fully discrete ECL. In the following proposition, the extension to $\H=\int H(z,z_x)$ is routine, we omit the $z$-dependence for clarity.

\begin{proposition}
Let $z(x,t)\in\R^n$, let $K$ be an $n\times n$ antisymmetric matrix and let $\cal H$ be a Hamiltonian of the form
${\cal H} = \int H(z_x)\, {\rm d}x$ with equations of motion
$$z_t =K\frac{\delta {\cal H}}{\delta z} = - K\partial_x H_{z_x}.$$
Consider a finite difference discretization with $\H=\int H(z_x)\, {\rm d}x$ discretized to $\H_d = \sum_i H(\Delta z_i)$ where $\Delta z_i := z_{i+1}-z_i$. The semidiscretization 
$$ z_t = K \nabla\H_d$$
has a semidiscrete ECL and the discrete gradient methods applied to this semidiscretization has a fully discrete ECL.
\end{proposition}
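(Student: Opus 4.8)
The plan is to make the semidiscretization explicit and then re-run the local computation from the proof of Proposition~1, with $\partial_x$ replaced by a difference operator. Since $\H_d=\sum_i H(\Delta z_i)$ and each summand depends only on the single edge variable $\Delta z_i=z_{i+1}-z_i$, differentiating in $z_j$ gives $(\nabla\H_d)_j = H_{z_x}(\Delta z_{j-1})-H_{z_x}(\Delta z_j)$, writing $H_{z_x}$ for the gradient of $H$ (consistent with the paper, $z_x$ being its only argument). Abbreviating $W_i:=H_{z_x}(\Delta z_i)$, the semidiscretization is $\dot z_j = K(W_{j-1}-W_j)$: the backward difference of $-W$ acted on by $K$, a natural discretization of $-K\partial_x H_{z_x}$.

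For the semidiscrete ECL I would differentiate the candidate density $H(\Delta z_j)$ in time exactly as in Proposition~1. The chain rule gives $\frac{d}{dt}H(\Delta z_j)=W_j^\top(\dot z_{j+1}-\dot z_j)$; substituting the scheme, $\dot z_{j+1}-\dot z_j = K(2W_j-W_{j+1}-W_{j-1})$, so $\frac{d}{dt}H(\Delta z_j)=W_j^\top K(2W_j-W_{j+1}-W_{j-1})$; the term $W_j^\top K W_j$ vanishes by antisymmetry of $K$; and the scalar identity $W_j^\top K W_{j-1}=-W_{j-1}^\top K W_j$ turns the remainder into a telescoping difference. This yields the semidiscrete ECL
$$ \frac{d}{dt}H(\Delta z_j) + \bigl(F_{j+1}-F_j\bigr)=0,\qquad F_j:=W_{j-1}^\top K W_j = W_{j-1}^\top K(W_j-W_{j-1}), $$
whose flux is the expected discretization of the continuous flux $H_{z_x}^\top K\,\partial_x H_{z_x}$ of Proposition~1.

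For the discrete gradient method (I write $z^0,z^1$ for the two time levels, freeing the subscript for the spatial index) I would build the discrete gradient of $\H_d$ edgewise: take any discrete gradient $\overline{H_{z_x}}$ of $H$ in its argument, set $\overline W_i:=\overline{H_{z_x}}(\Delta z^0_i,\Delta z^1_i)$, and define $(\dg\H_d)_j:=\overline W_{j-1}-\overline W_j$. A one-line discrete summation by parts---reindex $\sum_j(z^1_j-z^0_j)^\top\overline W_{j-1}$ and apply the discrete gradient axiom for $H$ on each edge---shows this is a genuine discrete gradient of $\H_d$, and the Average Value discrete gradient of $\H_d$ is already of exactly this form. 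The method is then $(z^1_j-z^0_j)/\Delta t = K(\overline W_{j-1}-\overline W_j)$, and the computation of the previous paragraph repeats verbatim with $W\mapsto\overline W$: the chain rule is replaced by the discrete gradient axiom $H(\Delta z^1_j)-H(\Delta z^0_j)=(\Delta z^1_j-\Delta z^0_j)^\top\overline W_j$, together with $\Delta z^1_j-\Delta z^0_j=(z^1_{j+1}-z^0_{j+1})-(z^1_j-z^0_j)$. The outcome is the fully discrete ECL
$$ \frac{H(\Delta z^1_j)-H(\Delta z^0_j)}{\Delta t} + \bigl(\overline F_{j+1}-\overline F_j\bigr)=0,\qquad \overline F_j:=\overline W_{j-1}^\top K\,\overline W_j, $$
that is, the same density and flux as the semidiscrete law with $W$ replaced by the discrete gradient $\overline W$, matching the pattern of the earlier propositions.

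The one genuinely delicate point is the choice of discrete gradient: an arbitrary discrete gradient of $\H_d$ only guarantees the global identity $\H_d(z^1)=\H_d(z^0)$, not a local conservation law, so one must use---or, as above, observe that the standard constructions already supply---a discrete gradient compatible with the edge structure $\H_d=\sum_i H(\Delta z_i)$. Everything else is the same purely local, integration-by-parts-free calculation as in Proposition~1; the summation by parts here is needed only to certify that the edgewise object is a legitimate discrete gradient, not for the conservation law itself.
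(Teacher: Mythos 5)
Your proof is correct and follows essentially the same route as the paper: the same conserved density $H(\Delta z_j)$, the same flux $W_{j-1}^\top K W_j$ obtained by telescoping via antisymmetry of $K$ (the paper packages this step as the discrete product rule $\Delta(a_i b_{i-1}) = b_i\Delta a_i + a_i\Delta b_{i-1}$), and the same replacement of the chain rule by the edgewise discrete gradient axiom in the fully discrete case. Your observation that the discrete gradient of $\H_d$ must be assembled edgewise from discrete gradients of $H$ on each $\Delta z_i$---together with the summation-by-parts check that this assembly is a legitimate discrete gradient of $\H_d$---makes explicit a point the paper's proof leaves implicit, but it is not a different argument.
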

\begin{proof}
The semidiscretization is
$$\eqalign{
(z_i)_t &= K(\nabla\H_d)_i \cr
&= K(\nabla H(\Delta z_{i-1})- \nabla H(\Delta z_i)) \cr
&= -K \Delta\nabla H(\Delta z_{i-1}) \cr
}$$
and the semidiscrete ECL is found as
$$\eqalign{
 (H(\Delta z_i))_t &= \nabla H(\Delta z_i)^\top (\Delta z_{i})_t \cr
&= -\nabla H(\Delta z_i)^\top\Delta  K \Delta  \nabla H(\Delta z_{i-1}) \cr
&= -\Delta \nabla H(\Delta z_i)^\top K \Delta \nabla H(\Delta z_i) - \nabla H(\Delta z_i)^\top K \Delta \nabla H(\Delta z_{i-1})\cr
&= -\Delta(\nabla H(\Delta z_i) K \Delta \nabla H(\Delta z_{i-1})) \cr
}$$
where the first term in the third line is zero and the last line follows because of the discrete product rule
$$ \Delta(a_i b_{i-1}) = b_i \Delta a_i + a_i \Delta b_{i-1}.$$
The parallel with the continuous ECL
$$ H_t = -H_{z_x}^\top K \partial _x H_{z_x}$$ 
is clear.
The discrete gradient method
$$\frac{z_1 - z_0}{\Delta t} = -K \Delta \overline{H_{z_x}}(\Delta z_{i-1})$$
has fully discrete ECL
$$\frac{H(z_1)-H(z_0)}{\Delta t} = 
- K \Delta(\overline{H_{z_x}}(\Delta z_i)^\top K \Delta \overline{H_{z_x}}(\Delta z_{i-1})),$$
the calculation proceeding exactly as in the spatially continuous case.
\end{proof}

\section*{Acknowledgements}
This research was supported by the Marsden Fund of the Royal Society of New Zealand and the Australian Research Council.

\end{document}